\documentclass{amsart}

\usepackage{color,latexsym,amsmath,amssymb,amsfonts,amsthm}

\usepackage{color,latexsym,amsmath,amssymb,amsfonts,amsthm,bm}
\usepackage{amscd}
\usepackage{graphicx}
\usepackage{tikz}
\usepackage{bm}

\newtheorem{lemma}{Lemma}[section]
\newtheorem{theorem}[lemma]{Theorem}
\newtheorem{proposition}[lemma]{Proposition}
\newtheorem{corollary}[lemma]{Corollary}
\newtheorem{first argument}[lemma]{First Argument}

\theoremstyle{definition}
\newtheorem{definition}[lemma]{Definition}
\theoremstyle{definitions}

\theoremstyle{remark}
\newtheorem{example}[lemma]{Example}
\newtheorem{examples}[lemma]{Examples}

\theoremstyle{remark}

\def\SC{\mathbb C}    
\def\SH{\mathbb H}    
\def\SN{\mathbb N}    
\def\SQ{\mathbb Q}    
\def\SZ{\mathbb Z}    

\def\mm{\mathfrak m}    

\def\Aa{{\mathcal A}}           
\def\Bb{{\mathcal B}}           
 
\def\Mm{{\mathcal M}}

\def\Oo{{\mathcal O}}           

\def\u2{\underline{2}}

\def\int{\mathrm{Int}}         

\def\spec1{\mathrm{Spec}^1}

\def\be{\begin{equation}}
\def\ee{\end{equation}}

\begin{document}

\title[Group rings and Pr\"ufer]{When is the ring of integer-valued polynomials over a group ring a Pr\"ufer domain?}

\begin{abstract}
In their study of the ring of integer-valued polynomials in noncommutative algebra, Peruginelli and Werner characterized the algebras for which this ring is a Pr\"ufer domain. Here, we apply their results to the case of group algebras.
\end{abstract}

\author{Jean-Luc Chabert}
\address{Facult\'e des Sciences\\
Universit\'e de Picardie\\
80039 Amiens, France\\
LAMFA CNRS-UMR 7352}

\email{jean-luc.chabert@u-picardie.fr}

\keywords{Integer-valued polynomial, group ring, Pr\"ufer domain}

\maketitle


\section{Introduction}

In all the paper, $D$ denotes a commutative integral domain distinct from its quotient field $K$.

\begin{definition}[only for this paper]
A $D$-algebra $\Aa$ is said to be a {\em nice $D$-algebra} if it is unitary, is finitely generated and torsion-free as a $D$-module and is such that $\Aa\cap K=D$.
\end{definition}

In the sequel, $\Aa$ always denotes a nice $D$-algebra. Let $\Bb=K\otimes_D\Aa$ and $m=\dim_K \Bb$. With these hypotheses, we may assume that $\Bb$ contains $K$ and $\Aa$.

\begin{examples} The following rings are nice $D$-algebras.

a) $\Mm_n(D)$ the ring of square matrices of size $n$ with entries in $D$.

b) $\SH_D$ the ring of Hamilton quaternions with coefficients in $D$.

c) $D[G]$ the group ring where $G$ is a finite group.	
\end{examples}

In a recent paper, Peruginelli and Werner characterized the case where the following ring
 of {\em integer-valued polynomials on} $\Aa$ is a Pr\"ufer domain:
 $$\int_K(\Aa)=\{f(X)\in K[X]\mid f(\Aa)\subseteq\Aa\}.$$

\begin{theorem}\cite[Theorem 1.7\,(1)] {bib:PW2026}\label{th:B}
Assume that $\Aa$ is a nice $D$-algebra. Then, the ring $\int_K(\Aa)$ is a Pr\"ufer domain if and only if $D$ itself is a Pr\"ufer domain and $\Aa=\Aa'$ where $\Aa'$ is the set formed by the elements of $\Bb$ which are integral over $D$.
\end{theorem}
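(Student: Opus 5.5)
We prove the two implications separately, working throughout inside $\Bb$, which contains both $K$ and $\Aa$. The basic tool is the comparison with the commutative theory. For any $a\in\Bb$, the subalgebra $K[a]$ is a field-like finite-dimensional commutative $K$-algebra. Moreover, $f(a)$ depends only on $f$ modulo the minimal polynomial $\mu_a\in K[X]$ of $a$, which has degree at most $m$.

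\textbf{Necessity.} First suppose $\int_K(\Aa)$ is Prüfer. Since $\Aa\cap K=D$, every $f\in\int_K(\Aa)$ satisfies $f(D)\subseteq \Aa\cap K=D$, so $\int_K(\Aa)\subseteq \int(D)$. We also have $D[X]\subseteq \int_K(\Aa)$.
\begin{itemize}
\item To get $D$ Prüfer, localize at a prime $\mathfrak p$ of $D$. The standard fact is that a Prüfer domain $R$ with $D[X]\subseteq R\subseteq K[X]$ forces $D=R\cap K$ to be Prüfer. Here $R\cap K=\{c\in K: c\Aa\subseteq\Aa\}=D$, because $1\in\Aa$ and $\Aa\cap K=D$. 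So I would prove or invoke the lemma that the constants of a Prüfer ring of polynomials lying between $D[X]$ and $K[X]$ form a Prüfer domain. This goes via valuation overrings: contract a valuation overring of $R$ at a maximal ideal lying over $\mathfrak p$ to $K$.
\item To get $\Aa=\Aa'$, argue by contradiction. Take $b\in\Aa'\setminus\Aa$ (clearly $\Aa\subseteq\Aa'$). Then $b$ is integral over $D$, but its minimal polynomial's behaviour shows that $\int_K(\Aa)$ is not integrally closed in a suitable sense. Concretely, I expect to find an element of $K[X]$ integral over $\int_K(\Aa)$ that does not lie in it, violating integral closedness of Prüfer domains. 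The natural candidate is to use that $\int_K(\Aa')$ is integral over $\int_K(\Aa)$ but strictly larger when $\Aa\neq\Aa'$.
\end{itemize}

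\textbf{Sufficiency.} Now assume $D$ is Prüfer and $\Aa=\Aa'$. Then
\[
\int_K(\Aa)=\{f\in K[X] : f(a)\text{ integral over }D\ \text{for all } a\in\Aa\}.
\]
Each such condition can be rewritten via the eigenvalues of $a$, that is, the roots of $\mu_a$ in an algebraic closure $\bar K$. An element $f(a)$ is integral exactly when $f(\alpha)$ is integral over $D$ for every root $\alpha$ of $\mu_a$; this can be proved by passing to the semisimple part, or to a splitting over the integral closure. Hence $\int_K(\Aa)=\int_K(\Omega,\bar D)$, where $\Omega\subset\bar K$ is the set of all eigenvalues of elements of $\Aa$. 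Every element of $\Omega$ has degree at most $m$ over $K$. Rings of this form, polynomials integer-valued on a set of integral elements of bounded degree over a Prüfer base, are known to be Prüfer (Loper–Werner type results). I would invoke that theorem.

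\textbf{Main obstacle.} The delicate step is the eigenvalue reduction: showing that $f(a)$ is integral over $D$ iff $f(\alpha)$ is integral for each root $\alpha$ of $\mu_a$, including when $\mu_a$ has repeated roots (the nilpotent part). I would handle this by noting that the integral closure of $D$ in the commutative algebra $K[a]$ consists exactly of the elements whose images in every residue field of $K[a]$ are integral. This holds because nilpotents are always integral.
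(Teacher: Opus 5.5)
The paper gives no proof of this statement (it is quoted from Peruginelli--Werner), so your proposal has to stand on its own. It has two genuine gaps, and the first is really a defect in the statement as transcribed.

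\textbf{Sufficiency.} You invoke, at the end, the claim that $\{f\in K[X]: f(\Omega)\subseteq\overline D\}$ is Pr\"ufer whenever $D$ is Pr\"ufer and $\Omega$ has bounded degree. This is false already for $\Omega=D$, where the ring is $\int(D)$.

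Take $D=\SQ[t]_{(t)}$ and $\Aa=D$. Then $\Aa$ is nice, $\Aa'=D=\Aa$ and $D$ is Pr\"ufer, yet $\int_K(\Aa)=\int(D)=D[X]$ is not Pr\"ufer. In fact no nice $\Aa$ over this $D$ gives a Pr\"ufer ring. The reason is the chain $D[X]\subseteq\int_K(\Aa)\subseteq\int(D)\subseteq K(X)$, which uses $f(D)\subseteq\Aa\cap K=D$; it makes $\int(D)$ an overring of $\int_K(\Aa)$.

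That chain is the necessity argument you should have used. It shows ``$\int(D)$ Pr\"ufer'' is necessary. This is the hypothesis that should replace ``$D$ Pr\"ufer'': it is the one appearing in Theorem~\ref{th:14}, and any Loper--Werner-type theorem must assume it, since $\Omega=D$ is allowed. For Dedekind $D$ it means finite residue fields.

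Your sketch for ``$D$ Pr\"ufer'' is also loose. For $R=\int_K(\Aa)$, the contraction $R_{\mathfrak M}\cap K$ only dominates $D_{\mathfrak p}$; it need not equal it. The clean route is that $f(0)\in\Aa\cap K=D$, so $D\cong R/\{f: f(0)=0\}$ is a quotient of a Pr\"ufer domain by a prime ideal.

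Your eigenvalue reduction is correct, and the nilpotent argument handles repeated roots, so it is not the real obstacle. With $\int(D)$ Pr\"ufer it reduces sufficiency to the cited theorem on $\int_K(\Omega,\overline D)$, and that theorem is where the substance lies.

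\textbf{Necessity of $\Aa=\Aa'$.} This part is only a hope, and the proposed mechanism fails because $\int_K(\Aa')$ need not contain $\int_K(\Aa)$. Take $D=\SZ$, $\Aa=\SZ[\sqrt{-3}]$ and $\Aa'=\SZ[\omega]$. The polynomial $f=X^2(X-1)^2/2$ maps $\Aa$ into $\Aa$, because $\Aa/2\Aa\cong\mathbb F_2[s]/((s+1)^2)$ and $X^2(X+1)^2$ vanishes on its four elements. But $f(\omega)=-3/2$. So ``$\int_K(\Aa')$ is integral over $\int_K(\Aa)$ but strictly larger'' does not make sense.

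The right comparison ring is $S=\{f\in K[X]: f(\Aa)\subseteq\Aa'\}$, which does contain $\int_K(\Aa)$. One route would then need two facts:
(a) $S$ is integral over $\int_K(\Aa)$;
(b) $S\neq\int_K(\Aa)$ whenever $\Aa\neq\Aa'$, i.e.\ some polynomial maps $\Aa$ into $\Aa'$ but not into $\Aa$.
In the example, $(X^4-X)/2$ does this: it sends $\sqrt{-3}$ to $(9-\sqrt{-3})/2\in\Aa'\setminus\Aa$.

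Neither (a) nor (b) is proved in your proposal. Part (b) is the real content of the theorem, especially for noncommutative $\Aa$, where $\Aa'$ need not even be closed under addition.
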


Applying their results to the first two examples of nice algebras, they show that a matrix algebra $\Mm_n(D)$ where $n\geq 2$ never leads to a Pr\"ufer domain (\cite[\S\,4]{bib:LW2012} or \cite[Corollary 3.4]{bib:PW2016}), whereas the ring $\int_{\SQ}(\SH_{\SZ_{(2)}})$ is a Pr\"ufer domain where $\SH_{\SZ_{(2)}}$ denotes the Hurwitz quaternion ring on $\SZ_{(2)}$ \cite[Theorem 5.4]{bib:PW2026}, that is,
$$\SH_{\SZ_{(2)}}=\left\{a_0+a_1{\bf i}+a_2{\bf j}+a_3{\bf k}\mid a_i\in \SZ_{(2)}\;\forall i \textrm{ or } a_i\in \SZ_{(2)}+\frac{1}{2}\;\forall i\right\}.$$


When the Jacobson radical $J(D)$ of $D$ is equal to $(0)$ and $\int_K(\Aa)$ is a Pr\"ufer domain, then the $D$-algebra $\Aa$ is necessarily commutative and we have a more precise result:

\begin{theorem}\cite[Theorem 1.7\,(2)]{bib:PW2026}\label{th:14}
Assume that that the Jacobson radical $J(D)$ of $D$ is $(0)$ and that $\int(D)$ is a Pr\"ufer domain. Then, $\int_K(\Aa)$ is a Pr\"ufer domain if and only if 

\centerline{$\Aa\simeq \prod_{i=1}^t \Aa_i$} 

\smallskip

\noindent where $\Aa_i$ is the integral closure of $D$ in a finite extension $F_i$ of $K$. In this case, each $\Aa_i$ is a commutative integral domain such that $\int_{F_i}(\Aa_i)$ is a Pr\"ufer domain.
\end{theorem}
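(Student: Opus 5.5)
We prove Theorem~\ref{th:14} by reducing it to Theorem~\ref{th:B} and then reading off the structure of $\Aa'$ when $J(D)=(0)$. Since $\int(D)$ is assumed to be a Pr\"ufer domain, $D$ is a Pr\"ufer domain, because $\int(D)$ Pr\"ufer forces $D$ Pr\"ufer. So by Theorem~\ref{th:B}, $\int_K(\Aa)$ is a Pr\"ufer domain if and only if $\Aa=\Aa'$, the set of elements of $\Bb$ integral over $D$. The proof therefore has two parts. First we show that when $J(D)=(0)$, the condition $\Aa=\Aa'$ forces $\Bb$ to be a product of finite field extensions of $K$, and hence $\Aa'$ to be the product of the integral closures. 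Second we prove the converse and the final assertion.

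\textbf{Forward direction.} Assume $\Aa=\Aa'$. Then $\Aa'$ is a ring that is finitely generated over $D$. Suppose $\Bb$ has a nonzero nilpotent element $n$. Then $\lambda n$ is integral over $D$ for every $\lambda\in K$, so $Kn\subseteq\Aa'=\Aa$. But $\Aa$ is a finitely generated torsion-free $D$-module, so $Kn\subseteq \Aa$ would force $K$ to be a fractional-type submodule of $\Aa\cong$ a submodule of $D^m$ up to scalars. That gives $K=dD$ for some $d\ne0$, which is impossible since $D\neq K$. Hence $\Bb$ has no nonzero nilpotents, and the same argument applied to a Wedderburn decomposition $\Bb=\prod M_{n_i}(\Delta_i)$ kills all $n_i\ge2$, since a matrix unit $e_{12}$ is nilpotent. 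We also need the division algebras $\Delta_i$ to be commutative. This is the step where $J(D)=(0)$ enters, and I expect it to be the main obstacle. The plan is to show that a noncommutative $\Delta$ contains elements integral over $D$ forming a set that is not closed under addition or not finitely generated, so that $\Aa'$ fails to be a ring, contradicting $\Aa'=\Aa$. Concretely, I would pick a maximal ideal $\mm$ of $D$ and, localizing, use that $\Delta$ of degree $d>1$ over its center $Z$ contains non-isomorphic maximal orders. The union of all integral elements therefore generates a ring strictly larger than any order, and it is in fact non-finitely generated once infinitely many maximal ideals are available. $J(D)=(0)$ guarantees infinitely many maximal ideals, and indeed a nonzero element of $D$ avoiding being a unit means we can scale to produce unbounded denominators.

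\textbf{Reduction to fields.} Once we know $\Bb\cong\prod_{i=1}^t F_i$ with the $F_i$ field extensions of $K$ (centers $Z_i=F_i$), the integral elements of $\Bb$ are exactly the tuples whose components are integral. Using the idempotents, $\Aa'=\prod_i \Aa_i$ with $\Aa_i$ the integral closure of $D$ in $F_i$. Hence $\Aa=\Aa'\simeq\prod\Aa_i$.

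\textbf{Converse and last claim.} Conversely, if $\Aa\simeq\prod\Aa_i$ with $\Aa_i$ integrally closed, then $\Aa$ is integrally closed in $\Bb=\prod F_i$, so $\Aa=\Aa'$, and Theorem~\ref{th:B} applies. For the final sentence, each $\Aa_i$ is a domain. The projection $\Aa\to\Aa_i$ together with a polynomial $f\in\int_K(\Aa)$ shows that $\int_K(\Aa)\subseteq\int_{F_i}(\Aa_i)\cap K[X]$. Since $\Aa_i$ is itself a nice $D$-algebra with $\Aa_i=\Aa_i'$, Theorem~\ref{th:B} shows $\int_K(\Aa_i)$ is Pr\"ufer. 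Then $\int_{F_i}(\Aa_i)$ is integral over it, and an integral extension inside $F_i(X)$ of a Pr\"ufer domain is Pr\"ufer, which completes the argument.
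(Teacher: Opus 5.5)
The paper gives no proof of this statement; it is quoted from Peruginelli--Werner, so your proposal has to stand on its own. Several steps are fine: the reduction to $\Aa=\Aa'$ via Theorem~\ref{th:B}; the nilpotent argument showing $\Bb$ is reduced, hence semisimple and a product of division algebras; the identification $\Aa'=\prod\Aa_i$ once these are fields; and the converse. The genuine gap is the step you flag yourself, namely commutativity of the division algebras, which is the only place where $J(D)=(0)$ enters. Your sketch would not work as written, for three reasons.
\begin{itemize}
\item Localizing at an arbitrary maximal ideal need not make the integral elements of a noncommutative $\Delta$ fail to form a ring. In the paper's own example, the $\SZ_{(2)}$-integral elements of the rational quaternions form the ring $\SH_{\SZ_{(2)}}$ (the unique maximal order), $\int_{\SQ}(\SH_{\SZ_{(2)}})$ is Pr\"ufer, and there $J(\SZ_{(2)})\neq(0)$. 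So $\mm$ must be \emph{chosen}.
\item The relevant consequence of $J(D)=(0)$ is not that there are infinitely many maximal ideals. It is that any given nonzero $\delta\in D$ lies outside some maximal ideal.
\item Your sketch never uses that $\int(D)$, and not merely $D$, is Pr\"ufer, yet this is essential. It forces every residue field $D/\mm$ to be finite: otherwise Lagrange interpolation gives $\int(D)\subseteq D_\mm[X]$, which would be a non-Pr\"ufer overring of a Pr\"ufer domain.
\end{itemize}

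Here is one way to close the gap. Let $e$ be the central idempotent of a noncommutative factor $\Delta$. Then $e\in\Aa$, since idempotents are integral, and $\Lambda=e\Aa$ is exactly the set of $D$-integral elements of $\Delta$ (with identity $e$). Pick $x,y\in\Lambda$ with $c=xy-yx\neq 0$, and $0\neq\delta\in D$ with $\delta c^{-1}\in\Lambda$. Because $J(D)=(0)$, there is a maximal ideal $\mm$ with $\delta\notin\mm$.
\begin{itemize}
\item $c$ is a unit of $\Lambda_\mm$, so its image in $\Lambda_\mm/J(\Lambda_\mm)$ is nonzero. Hence this quotient is a noncommutative semisimple ring, and it is finite because $\mm\Lambda_\mm\subseteq J(\Lambda_\mm)$ and $D/\mm$ is finite.
\item By Wedderburn's little theorem it has a factor $M_d(\mathbb F_q)$ with $d\geq 2$.
\item $\Lambda_\mm$ is the set of all $D_\mm$-integral elements of $\Delta$, and conjugation preserves integrality. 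So $u\Lambda_\mm u^{-1}=\Lambda_\mm$ for every nonzero $u\in\Delta$.
\item Take $u\in\Lambda_\mm$ reducing to $e_{11}$ in that factor and to $1$ in the others. Then $u\Lambda_\mm=\Lambda_\mm u$ is a two-sided ideal, but its image in $\Lambda_\mm/J(\Lambda_\mm)$ is not two-sided, a contradiction.
\end{itemize}

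The final assertion is also not established. You assert without proof that $\int_{F_i}(\Aa_i)$ is integral over $\int_K(\Aa_i)$. The norm trick $\prod_\sigma(T-f^\sigma)$ works when $F_i/K$ is Galois, because then $f^\sigma(\Aa_i)\subseteq\Aa_i$, but it is not formal in general. Moreover, ``an integral extension of a Pr\"ufer domain is Pr\"ufer'' is false: $\SZ[\sqrt{-3}]$ is integral over $\SZ$ but not Pr\"ufer. You also need $\int_{F_i}(\Aa_i)$ to be integrally closed, so that it is the integral closure of a Pr\"ufer domain in a finite extension of its quotient field.
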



\section{Group rings}

Let $G$ be a group and let $D[G]$ be the $D$-algebra  
$$D[G]=\{\sum_{g\in G}a_g\,g\;\big\vert \; a_g\in D \textrm{  almost all zero} \}.$$ 
It is a free $D$-module with basis $\{g\}_{g\in G}$ and a multiplication defined by $$\left(\sum_{h\in G}a_hh\right)\times\left(\sum_{k\in G}b_kk\right)=\sum_{g\in G}\left(\sum_{hk=g}a_hb_k\right)g.$$
The group ring  $D[G]$ is a nice $D$-algebra if and only if $G$ is a finite group. Thus, from now on we assume that $G$ is a finite group. Of course, if $\Aa=D[G]$, then $\Bb=K[G]$ and 

\centerline{$m=\dim_K K[G]=\mathrm{rk}_D (D[G])=\vert G\vert$.}

\smallskip

\noindent By Theorem~\ref{th:14}, if $J(D)=(0)$ and if $\int_K(D[G])$ is a Pr\"ufer domain, then $D[G]$ is commutative. Clearly, $D[G]$ is commutative if and only if $G$ is abelian.

\medskip

The purpose of this text is to characterise the finite groups $G$ such that 
$$\int_{K}(\Oo_K[G])=\{f\in K[X]\mid f(\Oo_K[G])\subseteq \Oo_K[G]\}$$
is a Pr\"ufer domain, where $K$ denotes an algebraic number field and $\Oo_K$ its ring of integers. As previously said, $G$ has to be abelian. We are going to prove that this necessary condition is also sufficient.

\begin{example}
Let $C_p$ be a cyclic group whose order is a prime number $p$ and let $\zeta_p\in\SC$ be a $p$-th primitive root of unity. We have the following ring isomorphisms:
$$\SZ[C_p]\simeq \SZ[X]/(X^p-1)\simeq\SZ[X]/(X-1)\times\SZ[X]/(1+X+\ldots+X^{p-1})\simeq\SZ\times\SZ[\zeta_m].$$
Consequently, 
$$\int_{\SQ}(\SZ[C_p])=\int_{\SQ}(\SZ)\cap\int_{\SQ}(\SZ[\zeta_m])=\int_{\SQ}(\SZ[\zeta_m]).$$
But we know that this last ring is a Pr\"ufer domain (cf. \cite{bib:LW2012} or, for instance, see \cite[Corollary VIII.60]{bib:C2}).
\end{example}

In fact, this example may be easily generalized thanks to the following general result that we recall:
  
\begin{theorem}\cite[Theorem 1]{bib:PW1950}\label{thm:2.6}
If $G$ is a finite abelian group of order $n$ and if the characteristic of the field $K$ does not divides $n$, the group algebra $K[G]$ is iomorphic to a product of rings of the form
$$K[G]\simeq \prod_{d\vert n}(K(\zeta_d))^{a_d}$$ 
where $a_d\in\SN$  and $\zeta_d$ is a $d$-th primitive root of unity. More precisely, 
$$a_d=\frac{\#\{\,g\in G \mid \textrm{ord}(g)=d\,\}}{[\,K(\zeta_d)\,:\,K\,]}.$$
\end{theorem}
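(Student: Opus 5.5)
The plan is to deduce the decomposition from the structure of $K[X]/(X^d-1)$ and the Chinese remainder theorem, by first reducing to the cyclic case. Since $G$ is finite abelian, write $G\simeq C_{n_1}\times\cdots\times C_{n_r}$. Then $K[G]\simeq K[C_{n_1}]\otimes_K\cdots\otimes_K K[C_{n_r}]$. For a cyclic group $C_e$ we have $K[C_e]\simeq K[X]/(X^e-1)$. Because $\mathrm{char}\,K\nmid e$, the polynomial $X^e-1$ is separable, and it factors as $\prod_{d\mid e}\Phi_d(X)$, where $\Phi_d$ is the $d$-th cyclotomic polynomial.

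Over $K$ each $\Phi_d$ splits into $\varphi(d)/[K(\zeta_d):K]$ irreducible factors, each of degree $[K(\zeta_d):K]$. Every such factor $\pi$ gives $K[X]/(\pi)\simeq K(\zeta_d)$. The factors are pairwise coprime, so the Chinese remainder theorem yields $K[C_e]\simeq\prod_{d\mid e}K(\zeta_d)^{\varphi(d)/[K(\zeta_d):K]}$. This is exactly the claimed formula for cyclic groups, since $C_e$ has $\varphi(d)$ elements of order $d$.

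For the general case I would avoid tensoring products of cyclotomic fields directly and argue more invariantly. The algebra $K[G]$ is commutative and semisimple, by Maschke, or simply because it is a tensor product of separable algebras. Hence it is a finite product of finite field extensions of $K$. To identify the factors, pass to $\overline K$. Since $\overline K$ contains a primitive $n$-th root of unity, $\overline K[G]\simeq\overline K^{\,\hat G}$ via the characters $\chi\in\hat G=\mathrm{Hom}(G,\overline K^\times)$. The $K$-algebra maps $K[G]\to\overline K$ correspond to characters. The factors of $K[G]$ then correspond to $\mathrm{Gal}(\overline K/K)$-orbits of characters. The factor attached to the orbit of $\chi$ is the field $K(\chi(G))=K(\zeta_d)$, where $d=\mathrm{ord}(\chi)$, and the orbit has size $[K(\zeta_d):K]$. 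So $a_d$ is the number of characters of order $d$ divided by $[K(\zeta_d):K]$.

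The main obstacle, which is still easy, is the final count. I need the number of characters of order $d$ to equal the number of elements of $G$ of order $d$. This holds because $\hat G\simeq G$ noncanonically when $\overline K$ contains $\mu_n$. That isomorphism preserves element orders, since it is a group isomorphism. Some care is also needed to show that the Galois action on characters is through $\chi\mapsto\sigma\circ\chi$, with stabilizer that of $\zeta_d$. From this the orbit sizes follow.
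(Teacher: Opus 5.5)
The paper does not prove this statement. It quotes it from Perlis and Walker and uses it as a black box, so there is no internal argument to compare yours with. Your proof is correct and self-contained; it is the standard argument through Galois orbits of characters. Your general argument already covers the cyclic case, so the computation with $X^e-1=\prod_{d\mid e}\Phi_d$ can be dropped, as can the tensor reduction you end up not using.

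Two steps deserve an explicit line.

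\textbf{Imperfect $K$.} If $K$ is imperfect of characteristic $p\nmid n$, then $\overline K/K$ is not Galois. Work instead in $L=K(\zeta_n)$. It is finite Galois over $K$ because $X^n-1$ is separable, and it contains all character values. Hence $\mathrm{Hom}_{K\text{-alg}}(K[G],L)=\mathrm{Hom}(G,L^\times)=\hat G$ with $|\hat G|=n$.

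\textbf{Orbits versus factors.} If $K[G]\simeq\prod_i L_i$ with fields $L_i$, then $\mathrm{Hom}_{K\text{-alg}}(K[G],L)=\bigsqcup_i\mathrm{Hom}_K(L_i,L)$. Each piece is a single $\mathrm{Gal}(L/K)$-orbit of cardinality $[L_i:K]$. This uses that $L_i$ is separable over $K$, which holds because $L_i$ is generated by roots of $X^n-1$. Combined with $\mathrm{ord}(\chi)=|\chi(G)|$ and $\chi(G)=\mu_d$, your count goes through as written.

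What writing the proof out buys over the bare citation is that it shows exactly where $\mathrm{char}\,K\nmid n$ enters. The $\Phi_d$ are pairwise comaximal in $K[X]$ only because $n$ is a unit. Likewise, the primitive idempotents of $K[G]$ are orbit sums of $e_\chi=\frac1n\sum_{g\in G}\chi(g^{-1})g$. Consequently the theorem concerns $K$-algebras only. It does not restrict to an isomorphism $\Oo_K[G]\simeq\prod_d(\Oo_K[\zeta_d])^{a_d}$ when $|G|>1$, for two reasons:
\begin{itemize}
\item The idempotent of the factor $K$ attached to the trivial character is $\frac1n\sum_{g\in G}g$, which is not in $\Oo_K[G]$.
\item Already in $\SZ[X]$ the ideal $(X-1,\Phi_p)$ contains $p$, so the Chinese remainder step fails for $\SZ[C_p]$.
\end{itemize}
This matters because Corollary~\ref{cor:2.3} applies the theorem at the level of $\Oo_K[G]$ rather than $K[G]$. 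The $C_p$ example does the same, since it performs precisely this Chinese remainder step over $\SZ$.
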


\begin{corollary}\label{cor:2.3}
Assume that $K$ is an algebraic number field, with ring of integers $\Oo_K$, and that $G$ is a finite group with exponent $m$. Then, $$\int_K(\Oo_K[G])=\int_{K}(\Oo_K[\zeta_m]).$$
\end{corollary}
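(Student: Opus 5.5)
The plan is to prove the two inclusions separately, reducing each time to cyclic subgroups and to the quotient maps $\Oo_K[C_d]\to\Oo_K[\zeta_d]$ given by $g\mapsto\zeta_d$. Throughout I use two elementary facts. The first: if $\varphi:A\to A'$ is a ring homomorphism of $\Oo_K$-algebras extending to $K\otimes$, then $\varphi(f(a))=f(\varphi(a))$ for $f\in K[X]$. The second: if $A_1\subseteq A_2$ are nice $\Oo_K$-algebras with $A_2\cap(K\otimes A_1)=A_1$, then $\int_K(A_2)\subseteq \int_K(A_1)$.

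\emph{Inclusion $\subseteq$.} Let $f\in\int_K(\Oo_K[G])$ and $g\in G$ of order $d$. The $\Oo_K$-basis $G$ of $\Oo_K[G]$ contains the basis $\langle g\rangle$ of $\Oo_K[\langle g\rangle]$. Hence $\Oo_K[G]\cap K[\langle g\rangle]=\Oo_K[\langle g\rangle]$, and so $f\in\int_K(\Oo_K[C_d])$.

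The surjection $\Oo_K[C_d]\simeq\Oo_K[X]/(X^d-1)\to\Oo_K[\zeta_d]$ lets every $\beta\in\Oo_K[\zeta_d]$ lift to some $b$. Then $f(\beta)$ is the image of $f(b)\in\Oo_K[C_d]$, so $f\in\int_K(\Oo_K[\zeta_d])$ for every order $d$ occurring in $G$.

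It remains to pass from these $d$ to their lcm $m$. When $G$ is abelian there is an element of order $m$ and we are done. In general I would show
$$\bigcap_{d}\int_K(\Oo_K[\zeta_d])=\int_K(\Oo_K[\zeta_m]),\qquad m=\mathrm{lcm}(d).$$
Both sides are Dedekind-type rings of integer-valued polynomials. I would localize at each maximal ideal $\mathfrak p$ of $\Oo_K$ and compare them through the finite residue fields and ramification indices of $\mathfrak p$ in $K(\zeta_d)$. The standard description of $\int_K(\Oo_L)_{\mathfrak p}$ in terms of these invariants is as in \cite[Ch.\ VIII]{bib:C2}. The lcm property of cyclotomic residue degrees and ramification should give the equality.

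\emph{Inclusion $\supseteq$.} Let $f\in\int_K(\Oo_K[\zeta_m])$ and $a\in\Oo_K[G]$. Here I would use Theorem~\ref{thm:2.6} and its nonabelian analogue: since $m$ is the exponent, $K(\zeta_m)$ is a splitting field for $G$ by Brauer's theorem. So $K(\zeta_m)[G]$ is a product of matrix algebras over $K(\zeta_m)$, and the eigenvalues of $a$ in each representation lie in the integral closure of $\Oo_K$ in $K(\zeta_m)$.

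Write $f=h/c$ with $h\in\Oo_K[X]$ and $c\in\Oo_K$. The goal is $h(a)\in c\,\Oo_K[G]$. Localizing at a prime $\mathfrak p\mid c$, I would reduce to checking that $h$ maps $\Oo_K[G]/c$ into $0$. The idea is to write $a$ modulo $c$ as a polynomial expression in commuting group elements, for instance via the cyclic subalgebras generated by powers of $a$, and to use that $h$ vanishes modulo $c$ on each $\Oo_K[\zeta_d]$, $d\mid m$.

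\emph{Expected main obstacle.} The hardest point is this reverse inclusion for nonabelian $G$ and non-cyclic $\Oo_K[a]$. There, $f(a)$ is only known to be integral with integral eigenvalues, while $\Oo_K[G]$ is not a maximal order. My first attempt would be a Frisch-type argument: show that $\int_K(\Oo_K[G])$ coincides with the polynomials that are integer-valued on all elements of $K[G]$ whose minimal polynomial divides a product of cyclotomic-type monic polynomials over $\Oo_K$ of the relevant degrees. Then I would compare these with the values of $f$ on the corresponding elements of $\Oo_K[\zeta_m]$.
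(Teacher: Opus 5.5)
\textbf{There is a genuine gap in the reverse inclusion, and it cannot be closed: the inclusion $\supseteq$ is false, so the stated equality is false.}

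\textbf{What your proposal gets right.} Your first step is sound. You restrict to $\Oo_K[\langle g\rangle]\subseteq\Oo_K[G]$, which is legitimate because $\langle g\rangle$ is part of the basis $G$. You then lift along the surjection $\Oo_K[\langle g\rangle]\to\Oo_K[\zeta_d]$, $g\mapsto\zeta_d$. This proves $\int_K(\Oo_K[G])\subseteq\int_K(\Oo_K[\zeta_m])$ whenever $G$ has an element of order $m$, in particular when $G$ is abelian.

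\textbf{Why the reverse inclusion fails.} The step you single out as the main obstacle is not merely hard; it is false, so no argument can establish the equality as stated. Take $K=\SQ$ and $G=C_2=\{1,\sigma\}$, so $m=2$ and $\Oo_K[\zeta_2]=\SZ$. The polynomial $f=X(X-1)/2$ lies in $\int_\SQ(\SZ)$. But $a=1+\sigma$ satisfies $a^2=2a$, hence $f(a)=a/2=(1+\sigma)/2\notin\SZ[C_2]$.

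The reason is the one you noticed yourself. Inside $\SQ\times\SQ$, the ring $\SZ[C_2]$ is not $\SZ\times\SZ$ but the proper suborder $\{(x,y)\in\SZ^2 : x\equiv y \bmod 2\}$. Integer-valuedness on each factor $\Oo_K[\zeta_d]$ gives no control over the congruences that glue the factors together. So your plan of checking $h\equiv 0 \bmod c$ componentwise cannot be completed.

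More conceptually, $\frac{1}{|G|}\sum_{g\in G}g$ is an idempotent, hence integral, element of $K[G]$ that does not lie in $\Oo_K[G]$. By Theorem~\ref{th:B}, the ring $\int_K(\Oo_K[G])$ is therefore never Pr\"ufer when $|G|>1$, whereas $\int_\SQ(\SZ[\zeta_m])$ always is. So for $K=\SQ$ the claimed equality fails for every nontrivial $G$.

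\textbf{The paper's proof has the same flaw.} It reads off $\Oo_K[G]=\prod_{d\mid m}(\Oo_K[\zeta_d])^{a_d}$ from Theorem~\ref{thm:2.6}. That theorem decomposes only $K[G]$, and only for abelian $G$. At the integral level one obtains merely a subring of finite index, as the $C_2$ example shows. The same slip underlies the isomorphism $\SZ[C_p]\simeq\SZ\times\SZ[\zeta_p]$ in the paper's example: the ideals $(X-1)$ and $(1+X+\cdots+X^{p-1})$ of $\SZ[X]$ are not comaximal, since their sum contains $p$.

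\textbf{Your lcm identity for nonabelian $G$ is also false.} Take $K=\SQ$ and the polynomial $(X^2-X)(X^4-X)/2$.
\begin{itemize}
\item It is integer-valued on $\SZ[i]$: the residue field at $1+i$ is $\mathbb{F}_2$, so both factors lie in $(1+i)$ and their product lies in $(2)$.
\item It is integer-valued on $\SZ[\zeta_3]$: $2$ is inert with residue field $\mathbb{F}_4$, so $2\mid\alpha^4-\alpha$.
\item It is not integer-valued on $\SZ[\zeta_{12}]$, where the prime $\mathfrak P$ above $2$ has $e=f=2$. At $\alpha=\zeta_3+1+i$ the numerator has $\mathfrak P$-adic valuation $1<2=v_{\mathfrak P}(2)$.
\end{itemize}

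\textbf{What survives.} The only valid part of the statement is the inclusion $\subseteq$ for groups having an element of order equal to the exponent, which is exactly what your first step proves.
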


\begin{proof}
The $\Oo_K$-module $\Oo_K[G]$ lies inside the $K$-vector space $K[G]$ with basis $\{g\}_{g\in G}$. Thus, it follows from Theorem~\ref{thm:2.6} that $\Oo_K[G])=\prod_{d\vert m}(\Oo_K[\zeta_d])^{a_d}$. Consequently,

\quad\quad\quad $\int_K(\Oo_K[G])=\int_K(\cap_{d\vert m}\int_{K}(\Oo_K[\zeta_d])=\int_{K}(\Oo_K[\zeta_m]).$
\end{proof}

\begin{proposition}
Assume that $K$ is an algebraic number field, with ring of integers $\Oo_K$, and that $G$ is a finite group with exponent $m$. 
The ring $\int_K(\Oo_K[G])$ is a Pr\"ufer domain if and only if $G$ is commutative and $\Oo_K[\zeta_m]$ is integrally closed.
\end{proposition}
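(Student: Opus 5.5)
The plan is to prove necessity of commutativity with Theorem~\ref{th:14}, then reduce to the cyclotomic order via Corollary~\ref{cor:2.3} and conclude with Theorem~\ref{th:B}. Throughout, $D=\Oo_K$. It is a Dedekind domain with infinitely many maximal ideals, all with finite residue fields. I will use two known facts about it. First, $J(\Oo_K)=(0)$: a nonzero element lies in only finitely many maximal ideals. Second, $\int(\Oo_K)$ is a Pr\"ufer domain, which is the classical result for Dedekind domains with finite residue fields. With these, the hypotheses of Theorem~\ref{th:14} hold for the nice $\Oo_K$-algebra $\Oo_K[G]$.

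\emph{Step 1 (commutativity is necessary).} Suppose $\int_K(\Oo_K[G])$ is Pr\"ufer. Theorem~\ref{th:14} gives $\Oo_K[G]\simeq\prod_i\Aa_i$ with each $\Aa_i$ a commutative domain. Hence $\Oo_K[G]$ is commutative, so $G$ is abelian. This settles the case of nonabelian $G$: the ring is never Pr\"ufer there. From now on I assume $G$ abelian with exponent $m$.

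\emph{Step 2 (reduction to $\Oo_K[\zeta_m]$).} By Corollary~\ref{cor:2.3}, $\int_K(\Oo_K[G])=\int_K(\Oo_K[\zeta_m])$, so it suffices to decide when the latter ring is Pr\"ufer. I then check that $\Aa=\Oo_K[\zeta_m]$ is a nice $\Oo_K$-algebra.
\begin{itemize}
\item It is unitary.
\item It is finitely generated as an $\Oo_K$-module, since $\zeta_m$ is integral.
\item It is torsion-free, since it sits inside the field $K(\zeta_m)$.
\item It satisfies $\Aa\cap K=\Oo_K$: every element of $\Aa$ is integral over $\Oo_K$, and $\Oo_K$ is integrally closed in $K$.
\end{itemize}
Here $\Bb=K\otimes_{\Oo_K}\Aa=K(\zeta_m)$. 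Consequently $\Aa'$, the set of elements of $\Bb$ integral over $\Oo_K$, is exactly the ring of integers $\Oo_{K(\zeta_m)}$.

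\emph{Step 3 (apply Theorem~\ref{th:B}).} Theorem~\ref{th:B} says $\int_K(\Oo_K[\zeta_m])$ is Pr\"ufer if and only if $\Oo_K$ is Pr\"ufer and $\Oo_K[\zeta_m]=\Oo_{K(\zeta_m)}$. The first condition always holds, since $\Oo_K$ is Dedekind. The second condition says precisely that $\Oo_K[\zeta_m]$ is integrally closed. Combined with Step~1, this gives both directions of the Proposition.

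The step I expect to need the most care is the bookkeeping in Steps 1–2 rather than any hard computation. Concretely:
\begin{itemize}
\item One must verify the hypotheses of Theorem~\ref{th:14} for $\Oo_K$, quoting the Pr\"uferness of $\int(\Oo_K)$ from the literature.
\item Corollary~\ref{cor:2.3} should only be used once $G$ is known to be abelian, since Theorem~\ref{thm:2.6} requires this.
\end{itemize}
If one prefers not to rely on the identification of $\Oo_K[G]$ with a product of orders in the proof of Corollary~\ref{cor:2.3}, there is a fallback. Apply Theorem~\ref{th:B} directly to $\Oo_K[G]$, and compare integral closures through the decomposition $K[G]\simeq\prod_{d\mid m}K(\zeta_d)^{a_d}$ from Theorem~\ref{thm:2.6}, using the equality $\int_K(\Aa)=\int_K(\Aa')$ on the Pr\"ufer side.
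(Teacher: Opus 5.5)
\textbf{There is a genuine gap, and it is in Step~2.} The identity $\int_K(\Oo_K[G])=\int_K(\Oo_K[\zeta_m])$ of Corollary~\ref{cor:2.3} is false, even for abelian $G$. The paper's own proof relies on it in the same way. Apart from this, your route is the paper's, with Theorem~\ref{th:B} in place of the citation for the Pr\"uferness of $\int_K(\Oo_{K(\zeta_m)})$.

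The decomposition of Theorem~\ref{thm:2.6} holds over $K$ but does not descend to $\Oo_K$. The idempotents realizing it, such as $e=|G|^{-1}\sum_{g\in G}g$, do not lie in $\Oo_K[G]$. So $\Oo_K[G]$ is only a proper suborder of finite index in $\prod_{d\mid m}(\Oo_K[\zeta_d])^{a_d}$. Already for $\SZ[C_p]$, the natural map to $\SZ\times\SZ[\zeta_p]$ has cokernel $\SZ[X]/\bigl((X-1)+(\Phi_p(X))\bigr)=\SZ/p\SZ$, because $\Phi_p(1)=p$. Concretely, take $K=\SQ$ and $G=C_2=\{1,g\}$, so $m=2$ and $\SZ[\zeta_2]=\SZ$. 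The polynomial $f=X(X-1)/2$ lies in $\int(\SZ)=\int_{\SQ}(\SZ[\zeta_2])$, but $f(g)=(g^2-g)/2=(1-g)/2\notin\SZ[C_2]$.

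\textbf{The gap cannot be filled, because the ``if'' direction of the statement is false.} Your own fallback shows this. Apply Theorem~\ref{th:B} directly to the nice algebra $\Oo_K[G]$. Since $e^2=e$, the element $e$ is integral over $\Oo_K$ and lies in $\Aa'$. But $e\notin\Oo_K[G]$ whenever $|G|>1$, because $1/|G|\notin\Oo_K$ (as $\Oo_K\cap\SQ=\SZ$). Hence $\int_K(\Oo_K[G])$ is never Pr\"ufer for $G\neq 1$. Yet for $K=\SQ$ every finite abelian $G$ satisfies the right-hand side, since $\SZ[\zeta_m]$ is always integrally closed.

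In the $C_2$ example this can be checked by hand. Put $e_{1,2}=(1\pm g)/2$. Then $f(f-1)$ maps $\SZ[C_2]$ into $2(\SZ e_1+\SZ e_2)\subseteq\SZ[C_2]$. So $f$ is a root of $T^2-T-f(f-1)$ over $\int_{\SQ}(\SZ[C_2])$ without lying in it, and that ring is not even integrally closed.

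\textbf{What survives is the ``only if'' direction, with Step~2 weakened to an inclusion.} A character of order $m$ of the abelian group $G$ gives a $K$-algebra surjection $K[G]\to K(\zeta_m)$. It carries $\Oo_K[G]$ onto $\Oo_K[\zeta_m]$ and commutes with evaluation of polynomials in $K[X]$. Hence $\int_K(\Oo_K[G])\subseteq\int_K(\Oo_K[\zeta_m])$. Overrings of Pr\"ufer domains are Pr\"ufer, so your Step~3 then shows that $\Oo_K[\zeta_m]$ is integrally closed. The correct characterization is: $\int_K(\Oo_K[G])$ is a Pr\"ufer domain if and only if $G$ is trivial.
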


\begin{proof}
It follows from Theorem~\ref{th:14} and Corollary~\ref{cor:2.3} that the conditions are necessary. Conversely, the conditions are sufficient: if $\Oo_K[\zeta_m]$ is integrally closed, then $\int_K(\Oo_K[G])=\int_K(\Oo_{K(\zeta_m)})$ which is a Pr\"ufer domain by \cite[Theorem 3.7]{bib:LW2012} or \cite[Corollary VIII.60]{bib:C2}.
\end{proof}

The ring $\Oo_K[\zeta_m]$ is integrally closed if and only if $\Oo_{K(\zeta_m)}= \Oo_K[\zeta_m]$, equivalently, if $\Oo_{K(\zeta_m)}=\Oo_K\cdot\SZ[\zeta_m]$. With respect to this last equality recall:

\begin{proposition}\cite[Lemma 1.2]{bib:shimura}.
Let $M$ and $N$ be two algebraic number fields. If $M$ and $N$ are linearly disjoint and if the discriminant of $MN$ is equal to the product of the discriminants of $M$ and $N$, then $\Oo_{MN}=\Oo_M\cdot\Oo_N$.	
\end{proposition}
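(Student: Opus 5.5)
The plan is the standard discriminant argument. Let $L=MN$ and $n=[M:\SQ]$, $n'=[N:\SQ]$. By linear disjointness, $[L:\SQ]=nn'$. Fix integral bases $\alpha_1,\dots,\alpha_n$ of $\Oo_M$ and $\beta_1,\dots,\beta_{n'}$ of $\Oo_N$ over $\SZ$. Linear disjointness also means the products $\alpha_i\beta_j$ are linearly independent over $\SQ$. Since they are $nn'$ in number, they form a $\SQ$-basis of $L$. So $\Oo_M\cdot\Oo_N=\bigoplus_{i,j}\SZ\,\alpha_i\beta_j$ is a free $\SZ$-submodule of $\Oo_L$ of full rank $nn'$, hence of finite index. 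We only need to show that this index is $1$.

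\textbf{Step 1: the discriminant of the product basis.} I would compute $d(\alpha_i\beta_j)$, the discriminant of the basis $\{\alpha_i\beta_j\}$. The $nn'$ embeddings $L\to\SC$ are exactly the pairs $(\sigma,\tau)$ with $\sigma$ an embedding of $M$ and $\tau$ an embedding of $N$. This holds because each such pair extends to $L$, by linear disjointness $L\simeq M\otimes_{\SQ}N$, and the count matches. The matrix $\big((\sigma,\tau)(\alpha_i\beta_j)\big)=\big(\sigma(\alpha_i)\tau(\beta_j)\big)$ is then the Kronecker product $A\otimes B$, where $A=(\sigma(\alpha_i))$ and $B=(\tau(\beta_j))$. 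Using $\det(A\otimes B)=\det(A)^{n'}\det(B)^{n}$ and squaring, we get
$$d(\alpha_i\beta_j)=d_M^{\,n'}\,d_N^{\,n}.$$

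\textbf{Step 2: index comparison.} For a full-rank sublattice $\Lambda\subseteq\Oo_L$ we have $d(\Lambda)=[\Oo_L:\Lambda]^2\,d_L$. Hence
$$[\Oo_L:\Oo_M\Oo_N]^2\,d_L=d_M^{\,n'}d_N^{\,n}.$$
The hypothesis ``the discriminant of $MN$ is equal to the product of the discriminants'' is to be read in the form $d_L=d_M^{[N:\SQ]}d_N^{[M:\SQ]}$. This is the only sensible reading, since $d_M^{n'}d_N^n$ always divides $d_L$ by the tower formula $d_L=d_M^{[L:M]}N_{M/\SQ}(\mathfrak d_{L/M})$. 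Under this reading the index is $1$, so $\Oo_L=\Oo_M\cdot\Oo_N$.

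\textbf{Main obstacle.} The delicate step is the identification of embeddings in Step 1, namely that every embedding of $L$ restricts to a distinct pair $(\sigma,\tau)$ and that all pairs occur. This is exactly where linear disjointness is used. I would derive it from $M\otimes_\SQ N\simeq L$, which is a field, together with $M\otimes_{\SQ}N\otimes_{\SQ}\SC\simeq\SC^{M}\otimes\SC^{N}$, where $\SC^M$ denotes $\SC^{\mathrm{Hom}(M,\SC)}$. The Kronecker determinant identity is routine.

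If instead the statement is meant with $d_L=d_Md_N$ literally, then Step 2 gives $d_Md_N=[\,\cdot\,]^2 d_L$ only under coprimality or degree assumptions. In that case I would note that the intended reading is the exponent-weighted one, which is also how Shimura's lemma is used for cyclotomic fields with coprime discriminants.
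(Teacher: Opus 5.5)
The paper gives no proof of this statement; it only quotes Shimura's Lemma~1.2. Your main argument is the standard proof and it is correct. It consists of the product basis $\{\alpha_i\beta_j\}$, the Kronecker-product computation $d(\{\alpha_i\beta_j\})=d_M^{n'}d_N^{n}$, and the index formula $d(\Lambda)=[\Oo_L:\Lambda]^2d_L$. Under the reading $d_L=d_M^{n'}d_N^{n}$ it proves $\Oo_L=\Oo_M\cdot\Oo_N$. This weighted reading is the one needed for Corollary~\ref{cor:2.6}.

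One supporting claim is false: $d_M^{n'}d_N^{n}$ does \emph{not} always divide $d_L$.

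\begin{itemize}
\item The divisibility goes the other way. Your own Step~2 shows $d_L\mid d_M^{n'}d_N^{n}$, with quotient $[\Oo_L:\Oo_M\Oo_N]^2$.
\item If your claim held, the two divisibilities together would give $\Oo_{MN}=\Oo_M\Oo_N$ for every linearly disjoint pair.
\item The paper's last example refutes that. For $M=\SQ(\sqrt2)$ and $N=\SQ(i)$ one has $d_M^2d_N^2=2^{10}$, while $d_{\SQ(\zeta_8)}=2^{8}$. So $\SZ[\sqrt2,i]$ has index $2$ in $\SZ[\zeta_8]$.
\end{itemize}

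The tower formula only gives $d_M^{n'}\mid d_L$ and $d_N^{n}\mid d_L$ separately. That is the correct reason to reject the literal reading $d_L=d_Md_N$. That reading would force $d_M^{n'-1}\mid d_N$ and $d_N^{n-1}\mid d_M$, which essentially never happens for $M,N\neq\SQ$. With this correction, the weighted hypothesis is in fact equivalent to the conclusion, given linear disjointness.

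Your final paragraph should also be rewritten. Step~2 gives $[\Oo_L:\Oo_M\Oo_N]^2d_L=d_M^{n'}d_N^{n}$ unconditionally. It does not give a relation of the form $d_Md_N=[\,\cdot\,]^2d_L$ under coprimality or degree assumptions.
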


\begin{corollary}\label{cor:2.6}
If	the prime divisors of $m$ are not ramified in $K$, then $\Oo_{K(\zeta_m)}=\Oo_K\cdot\SZ[\zeta_m]$, and hence, $\Oo_K[\zeta_m]$ is integrally closed. 
\end{corollary}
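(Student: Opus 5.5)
We will apply Shimura's proposition with $M=K$ and $N=\SQ(\zeta_m)$, so that $MN=K(\zeta_m)$ and $\Oo_N=\SZ[\zeta_m]$. It then suffices to check two things: that $K$ and $\SQ(\zeta_m)$ are linearly disjoint over $\SQ$, and that $d_{K(\zeta_m)}=d_K^{[\SQ(\zeta_m):\SQ]}\,d_{\SQ(\zeta_m)}^{[K:\SQ]}$. This is the correct form of the ``product of the discriminants'' in Shimura's lemma. Once these hold, $\Oo_{K(\zeta_m)}=\Oo_K\cdot\SZ[\zeta_m]$. The ring $\Oo_K\cdot\SZ[\zeta_m]$ is exactly $\Oo_K[\zeta_m]$, so the latter is integrally closed.

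\emph{Linear disjointness.} Let $L=K\cap\SQ(\zeta_m)$. Since $\SQ(\zeta_m)/\SQ$ is abelian, hence Galois, linear disjointness is equivalent to $L=\SQ$.
\begin{itemize}
\item Every prime ramified in $\SQ(\zeta_m)$ divides $m$, so $L$ can only ramify at primes dividing $m$.
\item By hypothesis these primes are unramified in $K$, hence unramified in the subfield $L$.
\item So $L/\SQ$ is unramified at every finite prime. Minkowski's theorem then forces $L=\SQ$.
\end{itemize}
Consequently $[K(\zeta_m):K]=\varphi(m)$.

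\emph{Discriminant identity.} Here I would use the standard criterion: if $M,N$ are linearly disjoint and $\gcd(d_M,d_N)=1$, then $\Oo_{MN}=\Oo_M\Oo_N$ and the discriminant formula holds. The primes dividing $d_{\SQ(\zeta_m)}$ all divide $m$, and none of them divides $d_K$ by the unramifiedness hypothesis. So the two discriminants are coprime.

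To stay within the paper's citation of Shimura, I would verify the discriminant identity directly. Locally at each prime $p$ there are two cases.
\begin{itemize}
\item If $p\nmid m$, then $\SZ[\zeta_m]$ is étale over $\SZ_{(p)}$. The extension $K(\zeta_m)/K$ is generated by a root of $\Phi_m$, whose discriminant is a unit at $p$, so $\Oo_K[\zeta_m]$ is maximal at $p$.
\item If $p\mid m$, then $K/\SQ$ is unramified at $p$, so $\Oo_{K,(p)}$ is étale over $\SZ_{(p)}$. The base change $\Oo_K\otimes\SZ[\zeta_m]$ is therefore étale over the regular ring $\SZ[\zeta_m]_{(p)}$, hence normal.
\end{itemize}
This local argument actually gives integral closedness directly, and also yields the discriminant relation via the tower formula $d_{MN}=N_{N/\SQ}(d_{MN/N})\,d_N^{[MN:N]}$. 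In that formula $d_{MN/N}=d_K$ comes from base change of a relative basis.

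\emph{Main obstacle.} I expect the discriminant step to be the hardest, specifically the case $p\mid m$. There one must see that étaleness of $\Oo_K$ at $p$ propagates to the tensor product over $\SZ$ with $\SZ[\zeta_m]$. I would handle it by the coprime-discriminant argument: pick a $\SZ$-basis of $\Oo_K$ and of $\SZ[\zeta_m]$. The product basis has discriminant $d_K^{\varphi(m)}d_{\SQ(\zeta_m)}^{[K:\SQ]}$. The index of $\Oo_K\SZ[\zeta_m]$ in $\Oo_{K(\zeta_m)}$ must divide both the part of this discriminant coming from $d_K$ and the part coming from $d_{\SQ(\zeta_m)}$, using the two relative towers. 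Since these parts are coprime, the index is $1$.
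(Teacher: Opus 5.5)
Your proposal is correct and follows the route the paper intends: the corollary is stated as an immediate consequence of Shimura's lemma with $M=K$ and $N=\SQ(\zeta_m)$. The hypothesis makes $d_K$ and $d_{\SQ(\zeta_m)}$ coprime, which gives both linear disjointness (via Minkowski) and the multiplicativity $d_{K(\zeta_m)}=d_K^{\varphi(m)}d_{\SQ(\zeta_m)}^{[K:\SQ]}$. Your additional local \'etale argument is a valid, self-contained alternative that yields $\Oo_{K(\zeta_m)}=\Oo_K[\zeta_m]$ directly, so with it the appeal to Shimura and the discriminant identity becomes unnecessary.
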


Asking Chat GPT about the equality $\Oo_K[\zeta_m]=\Oo_{K(\zeta_m)}$, it suggested both following examples. The first one shows that the condition of Corollary~\ref{cor:2.6} is not necessary.

\begin{example}
Let $K=\SQ(\sqrt{5})$ and $m=5$. Then $\Oo_K=\SZ\left[\frac{1+\sqrt{5}}{2}\right]$ and $\Oo_{K(\sqrt{5})}\cdot\SZ[\zeta_5]=\SZ\left[\frac{1+\sqrt{5}}{2},\zeta_5\right]$.
But $K=\SQ(\sqrt{5})=\SQ(\zeta_5+\zeta_5^{-1})$, 
thus $K(\zeta_5)=\SQ(\zeta_5)$ and $\Oo_{K(\zeta_5)}=\SZ[\zeta_5].$
\end{example}

On the contrary, the second example shows that the ring $\Oo_K[\zeta_m]$ is not always integrally closed.

\begin{example}[suggested to Chat GPT by Math StackExchange]
Let $K=\SQ(\sqrt{2})$ and $m=4$. Then, $\Oo_K=\SZ[\sqrt{2}]$ and $\Oo_{\SQ(\zeta_4)}=\SZ[i]$ while $K(\zeta_m)=\SQ(\sqrt{2},i)=\SQ\left(\frac{1+i}{\sqrt{2}}\right)=\SQ(\zeta_8)$ and $\Oo_{\SQ(\zeta_8)}=\SZ[\zeta_8]\not=\SZ[\sqrt{2},i]$. Thus, $\Oo_K[\zeta_4]$ is not integrally closed.
\end{example}

\noindent{\em Concluding remark}. It could be interesting to determine, if not known, when $$\Oo_K[\zeta_m]=\Oo_{K(\zeta_m)}.$$

\medskip

\noindent {\em Acknowledgment}. I would like to thank Giulio Peruginelli for helping me to correct the first version of this paper.


\end{document}